%A special issue of Journal of Mathematical Sciences, the University of Tokyo

\documentclass[12pt]{amsart}

\usepackage{amssymb, amscd, txfonts}
\usepackage[all]{xy}
\usepackage{graphicx}
%\usepackage[final]{graphicx}

%%%%%%% Layout %%%%%%%%%%%%%%%%%%%%%%%%%%%%%%%%%%%%%%%%%%%%%%%%%%%%%%%%%%%%%%%%
\setlength{\textwidth}{16cm}
\setlength{\oddsidemargin}{0cm}
\setlength{\evensidemargin}{0cm}
\setlength{\topmargin}{0cm}
\setlength{\textheight}{22.5cm}

\numberwithin{equation}{section}

\sloppy

%%%%%%% Theoremstyle %%%%%%%%%%%%%%%%%%%%%%%%%%%%%%%%%%%%%%%%%%%%%%%%%%%%%%%%%%
%\newtheorem{thm}{Theorem}[section]
\newtheorem{thm}{Theorem}

\newtheorem{lem}[thm]{Lemma}
\newtheorem{cor}[thm]{Corollary}
\newtheorem{conj}[thm]{Conjecture}

\theoremstyle{definition}

\theoremstyle{remark}

%%%%%%% Macro %%%%%%%%%%%%%%%%%%%%%%%%%%%%%%%%%%%%%%%%%%%%%%%%%%%%%%%%%%%%%%%%%
\renewcommand{\hom}{\operatorname{Hom}}
\renewcommand{\ker}{\operatorname{Ker}}

\newcommand{\Z}{\mathbb{Z}}

\newcommand{\C}{\mathbb{C}}
\newcommand{\F}{\mathbb{F}}

\DeclareMathOperator{\eul}{Eul}
\DeclareMathOperator{\im}{Im}

\DeclareMathOperator{\rank}{rank}

\DeclareMathOperator{\spin}{Spin}

\DeclareMathOperator{\tr}{tr}

\begin{document}

%%%%%%% Title %%%%%%%%%%%%%%%%%%%%%%%%%%%%%%%%%%%%%%%%%%%%%%%%%%%%%%%%%%%%%%%%%
\title[Twisted Alexander polynomials and incompressible surfaces given by ideal points]
{Twisted Alexander polynomials and incompressible surfaces given by ideal points}
\author[T.~Kitayama]{Takahiro KITAYAMA}
\address{Department of Mathematics, Tokyo Institute of Technology,
2-12-1 Ookayama, Meguro-ku, Tokyo 152-8551, Japan}
\email{kitayama@math.titech.ac.jp}
\subjclass[2010]{Primary~57M27, Secondary~57Q10}
\keywords{twisted Alexander polynomial, Reidemeister torsion, character variety}
%\dedicatory{}

\begin{abstract}
We study incompressible surfaces constructed by Culler-Shalen theory
in the context of twisted Alexander polynomials.
For a $1$st cohomology class of a $3$-manifold
the coefficients of twisted Alexander polynomials induce
regular functions on the $SL_2(\C)$-character variety.
We prove that if an ideal point gives
a Thurston norm minimizing non-separating surface dual to the cohomology class,
then the regular function of the highest degree has
a finite value at the ideal point.
\end{abstract}

\maketitle

%%%%%%% Section 1 %%%%%%%%%%%%%%%%%%%%%%%%%%%%%%%%%%%%%%%%%%%%%%%%%%%%%%%%%%%%%
\section{Introduction}

Culler and Shalen~\cite{CS} applied Tits-Bass-Serre theory~\cite{Se1, Se2}
to the functional field of the $SL_2(\C)$-representation variety
of a $3$-manifold,
and presented a method to construct essential surfaces in the $3$-manifold
from an ideal point of the $SL_2(\C)$-character variety.
On the basis of their theory, for example,
Morgan and Shalen~\cite{MS1, MS2, MS3} gave
new understanding of Thurston's fundamental works~\cite{Th2, Th3}
in the context of representations of a $3$-manifold group,
and Culler, Gordon, Luecke and Shalen~\cite{CGLS} proved
the cyclic surgery theorem on Dehn filling of knots.
We refer to the exposition \cite{Sh} for literature on Culler-Shalen theory.

Twisted Alexander polynomials~\cite{Li, W} of a $3$-manifold,
which is essentially equal to certain Reidemeister torsion~\cite{KL, Kitan},
induce regular functions on its character varieties.
The works~\cite{FV1, FV3} by Friedl and Vidussi showing
that twisted Alexander polynomials detect fiberedness of $3$-manifolds
and the Thurston norms of ones which are not closed graph manifolds
were breakthroughs.
Of particular interest on the invariants in recent years has been
properties and applications of the regular functions
on the $SL_2(\C)$-character variety~\cite{DFJ, KKM, KM, Kitay, KT, Mo}.
We refer to the survey paper \cite{FV2} for details and related topics
on twisted Alexander polynomials.  

This work was intended as an attempt to bring together the above two areas.
In general, it is difficult to figure out the isotopy class of an essential surface
constructed by Culler-Shalen theory.
%In general, given an ideal point of the $SL_2(\C)$-character variety,
%to figure out the isotopy classes of essential surfaces constructed
%by Culler-Shalen theory is difficult.
In this paper we describe a necessary condition
on the regular functions induced by twisted Alexander polynomials
for such a surface to be of a certain type.
It is known that the homology classes
of the boundary components of such a surface can be determined
by trace functions for simple closed curves in $\partial M$~\cite{CCGLS, CGLS}.  
The important point to note here is that our result concerns
the homology class of such a surface itself.
The result extends the main theorem of \cite{Kitay} on knot complements
to general $3$-manifolds.
%By the extension a new aspect of twisted Alexander polynomials reflecting homological information of a splitting of a $3$-manifold by Culler-Shalen theory is revealed.  

Let $M$ be a connected compact orientable irreducible $3$-manifold
with empty or toroidal boundary and let $\psi \in H^1(M; \Z)$ be nontrivial.
We denote by $X^{irr}(M)$ the Zariski closure of the subset
of the $SL_2(\C)$-character variety of $M$
consisting of the characters of irreducible representations.
For an irreducible component $X_0$ in $X^{irr}(M)$
we define an invariant $\mathcal{T}_\psi^{X_0} \in \C[X_0][t, t^{-1}]$
induced by refined torsion invariants in the sense of Turaev~\cite{Tu1, Tu2},
which is regarded as certain normalizations of twisted Alexander polynomials.
In the case where $M$ is a knot complement
$\mathcal{T}_\psi^{X_0}$ coincides with the invariant
introduced in \cite[Theorem 1.5 and Theorem 7.2]{DFJ},
and is called the \textit{torsion polynomial function} of $X_0$. 
From the corresponding property
of twisted Alexander polynomials~\cite[Theorem 1.1]{FK1}
the invariant $\mathcal{T}_\psi^{X_0}$ satisfies that
$\deg \mathcal{T}_\psi^{X_0} \leq 2 ~ ||\psi||_T$,
where $||\psi||_T$ is the Thurston norm of $\psi$.
For a curve $C$ in $X_0$ we write $\mathcal{T}_\psi^C \in \C[C][t, t^{-1}]$
for the restriction of $\mathcal{T}_\psi^{X_0}$ to $C$,
and set $c(\mathcal{T}_\psi^C) \in \C[C]$ to be the coefficient function
in $\mathcal{T}_\psi^C$ of the highest degree $2 ~ ||\psi||_T$.
%Again from  the corresponding property of twisted Alexander polynomials~\cite{C, FK1, GKM} if $\psi$ is represented by a fiber bundle $M \to S^1$, then $c(\mathcal{T}_\psi^C)$ is the constant function with value $1$.

We suppose that $X^{irr}(M)$ has an irreducible component of positive dimension.
For an ideal point $\chi$ of a curve in $X^{irr}(M)$
we say that \textit{$\chi$ gives a surface $S$} in $M$
if $S$ is constructed from $\chi$ by Culler-Shalen theory \cite[Section 2]{CS}
as described in Section $2$. 
The main theorem of this paper is as follows:
\begin{thm} \label{thm_main}
Let $\psi \in H^1(M; \Z)$ be nontrivial.
Suppose that an ideal point $\chi$ of a curve $C$ in $X^{irr}(M)$ gives
a surface $S$ in $M$ satisfying the following:
\begin{enumerate}
\item[\upshape{(1)}] The homology class of $S$ is dual to $\psi$.
\item[\upshape{(2)}] $S$ is Thurston norm minimizing.
\item[\upshape{(3)}] The surface obtained by identifying components of $S$ parallel to each other is non-separating.
\end{enumerate}
Then  $c(\mathcal{T}_\psi^C)(\chi)$ is finite.
\end{thm}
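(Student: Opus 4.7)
The plan is to express the coefficient $c(\mathcal{T}_\psi^C)$ as a minor of a matrix whose entries come only from the restriction of the tautological representation of $C$ to $\pi_1(S)$, and then to invoke Culler--Shalen theory to argue that such entries lie in the valuation ring associated to $\chi$ and hence remain finite under specialization.

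Let $F = \C(C)$ and let $\rho \colon \pi_1(M) \to SL_2(F)$ denote the tautological representation associated with $C$. The ideal point $\chi$ supplies a discrete valuation $v$ on $F$, with valuation ring $\mathcal{O}_v$. By Tits--Bass--Serre theory, $SL_2(F)$ acts on its associated tree $T$, and the hypothesis that $\chi$ gives $S$ means precisely that $S = f^{-1}(\text{midpoints of } T)$ for some $\pi_1(M)$-equivariant map $f \colon \tilde M \to T$ in transverse position, as recalled in Section~2. The key input I would use is the standard fact that if $g \in \pi_1(M)$ fixes a vertex or an edge of $T$, then $\rho(g)$ is conjugate in $SL_2(F)$ into $SL_2(\mathcal{O}_v)$; in particular, after a fixed conjugation, the matrix entries of $\rho(\pi_1(\Sigma))$ lie in $\mathcal{O}_v$ for each component $\Sigma$ of $S$.

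Next I would choose a CW structure on $M$ in which $S$ and $\partial M$ appear as subcomplexes, so that on the infinite cyclic cover $\widetilde{M}^\psi$ corresponding to $\psi$ each cell of the cut-open manifold $M|S$ lifts to $\Z$-many translates. Organizing the resulting cellular chain complex of $\widetilde{M}^\psi$ twisted by $\rho$ as a complex of free $F[t, t^{-1}]$-modules, one obtains boundary matrices of the form $A_0 + tA_1$, where $A_0$ encodes boundaries within $M|S$ and $A_1$ encodes the gluing identification across $S$. By \cite[Theorem 1.1]{FK1} we have $\deg \mathcal{T}_\psi^C \le 2 ~ ||\psi||_T$, and hypothesis~(2) makes this bound sharp; hypothesis~(3), applied after identifying parallel copies of components of $S$, guarantees that $A_1$ is of generic full rank in the relevant block, so that the coefficient of $t^{2 ~ ||\psi||_T}$ is, up to a unit arising from the Turaev normalization, a maximal minor of $A_1$. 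Since the entries of $A_1$ depend only on $\rho|_{\pi_1(S)}$, they lie in $\mathcal{O}_v$.

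The conclusion is then immediate: specialization at $\chi$ is the reduction $\mathcal{O}_v \to \mathcal{O}_v / \mathfrak{m}_v = \C$, under which any element of $\mathcal{O}_v$ remains finite. The main obstacle will be the precise identification of $c(\mathcal{T}_\psi^C)$ with a single minor of $A_1$ rather than with a ratio of minors. Handling this ratio requires careful bookkeeping of Turaev's refined torsion, of Euler structures on $M|S$, and, in the presence of parallel components of $S$, of the multiplicities with which these copies contribute; it is here that one must extend the argument of \cite{Kitay} from knot complements to the more delicate handle decompositions arising for general compact $3$-manifolds with possibly several components $\Sigma \subset S$.
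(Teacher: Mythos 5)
Your overall strategy is close to the paper's: reduce the leading coefficient to a quantity governed by a subgroup that stabilizes a vertex of the Bass--Serre tree, then invoke Culler--Shalen to conclude that the relevant regular function has no pole at $\chi$. However, the central claim that ``the entries of $A_1$ depend only on $\rho|_{\pi_1(S)}$'' is not correct, and this is a genuine gap. What the paper actually proves (via Milnor's multiplicativity of torsion applied to the Mayer--Vietoris sequences \eqref{eq_exact1} and \eqref{eq_exact2}) is that the leading coefficient equals $\tau_\rho(N, S' \times 1)$, where $N = M \setminus (S' \times (-1,1))$ is the manifold cut along $S'$. This torsion is computed from $C_*(\widetilde{N}, \widetilde{S' \times 1}) \otimes_{\rho} \C^2$, whose boundary matrices genuinely involve $\rho(\gamma)$ for $\gamma \in \pi_1 N$, not merely for $\gamma$ in the images of $\pi_1 S_i$. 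In your $A_0 + tA_1$ presentation the gluing block $A_1$ also picks up such elements of $\pi_1 N$ (and, depending on the choice of lifts, elements crossing $S'$, which must first be traded for the explicit $t$), so boundedness of $\rho|_{\pi_1 S}$ alone does not control it. The argument can be repaired, but only by replacing $\pi_1 S$ with $\pi_1 N$ throughout and then observing that $\pi_1 N$ is contained in a single vertex stabilizer; this is precisely where hypothesis (3) enters, to guarantee that $N$ is connected and sits inside a single complementary component of $S$, a point your proposal does not address.

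Two further issues. First, the assertion that the coefficient of $t^{2\|\psi\|_T}$ is ``up to a unit arising from the Turaev normalization, a maximal minor of $A_1$'' is not a formal consequence of generic full rank; the paper proves the identification via the surgery formula \eqref{eq_prod}, which in turn requires showing that $(\iota_+)_* \colon \bigoplus_i H_*^\rho(S_i; \C^2) \to H_*^\rho(N; \C^2)$ is an isomorphism and that $H_*^\rho(N, S' \times 1; \C^2) = 0$. That step needs the results of Friedl--Kim \cite{FK1}, and hypothesis (2) is used there (to make the inequalities of \cite[Proposition 3.3]{FK1} into equalities, since those inequalities are proved by cutting along a Thurston-norm-minimizing surface), not to ``make the degree bound sharp'' as you write. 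Second, arguing with matrix entries lying in $\mathcal{O}_v$ requires a simultaneous conjugation of $\rho|_{\pi_1 N}$ into $SL_2(\mathcal{O}_v)$ and produces functions on a curve in $R(M)$ rather than on $C$ itself; the paper sidesteps this by expressing $\tau_\rho(N, S' \times 1)$ as a polynomial in the conjugation-invariant traces $\{\tr \rho(\gamma)\}_{\gamma \in \pi_1 N}$, and then quoting \cite[Theorem 2.2.1]{CS} to see that the corresponding $I_\gamma$ have no pole at $\chi$.
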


%Note that a surface given by an ideal point of a curve in $X^{irr}(M)$ cannot be a fiber.
An initial motivation of this work came from the following conjecture
by Dunfield, Friedl and Jackson~\cite[Conjecture 8.9]{DFJ}.
\begin{conj}[{\cite[Conjecture 8.9]{DFJ}}] \label{conj_DFJ}
Let $M$ be the exterior of a knot $K$ in a homology $3$-sphere
and let $\psi \in H^1(M; \Z)$ be a generator.
If an ideal point $\chi$ of a curve $C$ in $X^{irr}(M)$ gives
a Seifert surface of $K$, then the leading coefficient of $\mathcal{T}_\psi^C$
has a finite value at $\chi$.
\end{conj}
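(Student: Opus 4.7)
The plan is to convert the finiteness claim into an integrality statement for the valuation $v$ associated with the ideal point $\chi$, and then to identify $c(\mathcal{T}_\psi^C)$ with the Reidemeister torsion of a complementary piece cut out by $S$, which is visibly integral.

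First I would set up the Culler--Shalen framework at $\chi$. The ideal point determines a discrete valuation $v$ on the function field $\C(C)$ with valuation ring $\mathcal{O}_v$ and residue field $\C$, together with the associated action of $SL_2(\C(C))$ on the Bass--Serre tree $T$. Pulling back along the tautological representation $P \colon \pi_1(M) \to SL_2(\C(C))$ gives an action of $\pi_1(M)$ on $T$, and the surface $S$ dual to $\psi$ is recovered as the preimage of midpoints of edges, as in Section~2. The key point is that the fundamental groups of the components of $N := M \setminus \nu(S)$ correspond to vertex stabilizers of this action, so after a suitable conjugation their images under $P$ lie in $SL_2(\mathcal{O}_v)$; reducing modulo the maximal ideal yields honest $SL_2(\C)$-representations of these subgroups, and hence finite specializations of any torsion invariant built intrinsically from $N$.

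Next I would compute $\mathcal{T}_\psi^C$ by cutting along $S$. By hypothesis~(3), after identifying parallel components $S$ may be taken to be a single connected non-separating surface dual to $\psi$, so $N$ is connected with two boundary copies $S_\pm$. A Wang/Mayer--Vietoris decomposition for the $\psi$-cyclic cover then expresses the twisted Alexander polynomial, and hence $\mathcal{T}_\psi^C$, as the torsion over $\C(C)[t,t^{-1}]$ of a chain complex assembled from the two inclusions $S_\pm \hookrightarrow N$ with the variable $t$ controlling the gluing. By hypothesis~(2), $S$ realizes the Thurston norm of $\psi$, so $2\,\|\psi\|_T = -2\chi(S)$ is the exact degree of $\mathcal{T}_\psi^C$ and no cancellation occurs at the top. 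Under this identification $c(\mathcal{T}_\psi^C)$ becomes, up to a Turaev refinement that is a unit in $\mathcal{O}_v$, the Reidemeister torsion of the pair $(N, S_+)$ twisted by the restriction of $P$. Since this torsion is a rational expression in the matrix entries of $P(\gamma)$ for $\gamma \in \pi_1(N)$, all of which lie in $\mathcal{O}_v$, the element $c(\mathcal{T}_\psi^C)$ belongs to $\mathcal{O}_v$. By definition of the value at the ideal point this is exactly the statement that $c(\mathcal{T}_\psi^C)(\chi)$ is finite.

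The main obstacle is the second step: establishing the precise cut-and-paste formula that identifies $c(\mathcal{T}_\psi^C)$ with the torsion of $(N, S_+)$, including the correct sign and normalization inside Turaev's refined torsion. This requires checking compatibility of Euler structures and homological orientations under the decomposition along $S$, and it is essential to use hypothesis~(3): if $S$ were separating, the cut would produce two pieces and the highest-degree coefficient would no longer be captured by a single torsion, so extra terms with possibly negative valuation could appear. In effect, what is needed is the analogue for general $3$-manifolds of the leading-coefficient computation carried out for knot exteriors in \cite{Kitay}, with the non-separating hypothesis replacing the use of a Seifert surface there.
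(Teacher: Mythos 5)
The statement you were given is stated in the paper as a \emph{conjecture}, and the paper does not prove it; it only proves the weaker Corollary in which the Seifert surface is assumed to be of minimal genus and the conclusion concerns $c(\mathcal{T}_\psi^C)$ rather than the leading coefficient. Your argument quietly imports exactly those two restrictions, so it is at best an outline of the paper's proof of Theorem~\ref{thm_main} (hence of the Corollary), not a proof of Conjecture~\ref{conj_DFJ}.

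Concretely, there are two gaps. First, you invoke ``hypothesis~(2)'' that $S$ realizes the Thurston norm, i.e.\ is a minimal genus Seifert surface. The conjecture makes no such assumption; an essential surface produced by an ideal point need not be norm minimizing, and without this the Mayer--Vietoris computation does not identify the coefficient of $\mathcal{T}_\psi^C$ at degree $2\|\psi\|_T$ with the torsion of $(N,S_+)$ (indeed, $-2\chi(S)$ would exceed $2\|\psi\|_T$). Second, even granting minimality, what one controls is $c(\mathcal{T}_\psi^C)$, the coefficient at the fixed degree $2\|\psi\|_T$. Your claim that ``no cancellation occurs at the top'' is not justified: it is entirely possible that $\deg \mathcal{T}_\psi^C < 2\|\psi\|_T$, in which case $c(\mathcal{T}_\psi^C)=0$ automatically while the leading coefficient sits at a lower degree and may still have a pole at $\chi$. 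The paper is explicit about both points: immediately after the Corollary it poses as open questions whether the minimizing hypothesis can be dropped and whether the conclusion on $c(\mathcal{T}_\psi^C)$ can be upgraded to the leading coefficient, noting precisely that when $\deg \mathcal{T}_\psi^C < 2\|\psi\|_T$ the leading coefficient ``is not necessarily bounded.'' Your three-step strategy (ideal point $\Rightarrow$ Bass--Serre tree and vertex stabilizer containing $\pi_1 N$; surgery formula expressing the top coefficient as the torsion of $(N,S_+)$; boundedness of trace functions $I_\gamma$ for $\gamma\in\pi_1 N$ via \cite[Theorem~2.2.1]{CS}) is exactly the paper's strategy for Theorem~\ref{thm_main}, and is sound for that theorem; but it does not reach the full conjecture.
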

In \cite{Kitay} the author gave a partial affirmative answer to the conjecture.
Now the result is a direct corollary of Theorem \ref{thm_main}.
\begin{cor}[{\cite[Theorem 1.2]{Kitay}}]
Let $M$ be the exterior of a knot $K$ in a homology $3$-sphere
and let $\psi \in H^1(M; \Z)$ be a generator.
If an ideal point $\chi$ of a curve $C$ in $X^{irr}(M)$ gives
a minimal genus Seifert surface of $K$,
then $c(\mathcal{T}_\psi^C)(\chi)$ is finite.
\end{cor}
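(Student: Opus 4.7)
The plan is to use the Bass--Serre action at the ideal point $\chi$ to restrict the tautological representation of $\pi_1(M)$ to the complementary pieces of $S$, observe that this restriction is regular at $\chi$, and then compute $\mathcal{T}_\psi^C$ via a cellular decomposition of $M$ adapted to $S$ so that the top coefficient depends only on such a restriction.

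More precisely, let $\mathcal{O}_\chi \subset \C(C)$ denote the discrete valuation ring of germs at $\chi$, with fraction field $K = \C(C)$, and let $P \colon \pi_1(M) \to SL_2(K)$ be the tautological representation associated to $C$. The ideal point determines a Bass--Serre tree $T$ for $SL_2(K)$ carrying an action of $\pi_1(M)$ without a global fixed point, and $S$ arises from transverse preimages of midpoints of edges under an equivariant map from the universal cover of $M$ to $T$. A standard feature of this construction is that for every component $N$ of $M \setminus S$, the subgroup $\pi_1(N) \subset \pi_1(M)$ stabilizes some vertex of $T$, which forces $P|_{\pi_1(N)}$ to be conjugate into $SL_2(\mathcal{O}_\chi)$. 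In particular every matrix entry of $P(\gamma)$ for $\gamma \in \pi_1(N)$ is a regular function on $C$ at $\chi$.

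Next I would compute $\mathcal{T}_\psi^C$ via a cellular decomposition of $M$ adapted to $S$. By hypothesis $(3)$, after identifying parallel components of $S$ we obtain a non-separating surface, and cutting $M$ along the latter yields a connected manifold $N$ whose boundary contains two copies $S_+$ and $S_-$ of $S$. Choose a CW structure on $N$ in which $S_+$ and $S_-$ are isomorphic subcomplexes. The twisted cellular chain complex of $M$ computing $\mathcal{T}_\psi^C$ may then be described by a mapping-torus-type complex built from $C_*(N)$ together with the two inclusion chain maps $\iota_\pm \colon C_*(S) \to C_*(N)$, weighted by $t$ and $1$ respectively. A Turaev torsion calculation identifies $\mathcal{T}_\psi^C(t)$, up to a normalizing unit in $\C[C]$, with a determinant of the form $\det(t A_+ - A_-)$, where $A_\pm$ are square matrices whose entries lie in the subring of $\C[C]$ generated by matrix entries of $P|_{\pi_1(N)}$. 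Hypothesis $(2)$ is needed to guarantee that $\deg \mathcal{T}_\psi^C = 2 \, ||\psi||_T$ is attained without cancellation in the leading term, so that $c(\mathcal{T}_\psi^C) = \det A_+$ up to a unit.

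Combining the two steps, $c(\mathcal{T}_\psi^C)$ is a polynomial in matrix entries of $P|_{\pi_1(N)}$, hence lies in $\mathcal{O}_\chi$, so $c(\mathcal{T}_\psi^C)(\chi)$ is finite. The main obstacle, and where the argument becomes most delicate, is matching the refined Turaev torsion $\mathcal{T}_\psi^{X_0}$ with the naive mapping-torus determinant above; this requires fixing a homological orientation and an Euler chain on $M$ compatibly with the decomposition along $S$ so that the normalizing unit appearing in $\mathcal{T}_\psi^C(t)$ does not absorb any pole in a priori. A secondary subtlety is keeping track of toroidal boundary components of $M$ together with parallel components of $S$; hypothesis $(3)$ plays an essential role here by ensuring that $N$ is connected, so that the determinant formulation above is well-defined and the ``positive side'' $S_+$ is unambiguous.
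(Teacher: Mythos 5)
You are proving the corollary from scratch, but the paper derives it in one line from Theorem~\ref{thm_main}: a minimal genus Seifert surface is connected with nonempty boundary, hence non-separating; its homology class is dual to the generator $\psi$; and it realizes the Thurston norm. So hypotheses~(1)--(3) of Theorem~\ref{thm_main} hold automatically and the conclusion follows immediately. Your argument is really an attempt to reprove the main theorem, and two of its steps have genuine gaps that the paper's proof of Theorem~\ref{thm_main} handles with more care.

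First, the claim that $\mathcal{T}_\psi^C$ equals a unit in $\C[C]$ times $\det(tA_+-A_-)$, with $A_\pm$ the matrices of the chain-level inclusions $\iota_\pm\colon C_*(\widetilde{S})\otimes\C(t)^2\to C_*(\widetilde{N})\otimes\C(t)^2$, does not make sense as stated: that map is injective but not surjective in each degree (its cokernel is $C_*(\widetilde{M})\otimes\C(t)^2$), so it is not square and has no determinant. The paper instead applies the multiplicativity and functoriality of Reidemeister torsion to the short exact sequences coming from splitting $M$ along $S'$, obtaining a surgery formula whose $t$-leading coefficient is $\tau_\rho(N,S'\times 1)$; a separate argument, using that $S$ realizes the Thurston norm (via the proof of \cite[Proposition~3.3]{FK1}), is needed to show that $(\iota_+)_*$ is an isomorphism on twisted homology so that $\tau_\rho(N,S'\times 1)$ is defined and nonzero, and one then expresses $\tau_\rho(N,S'\times 1)$ as a polynomial in traces $\tr\rho(\gamma)$ for $\gamma\in\pi_1 N$. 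Second, you work with a tautological representation $P\colon\pi_1 M\to SL_2(\C(C))$ and assert that its matrix entries restricted to $\pi_1 N$ are regular on $C$ at $\chi$. The tautological representation is defined over $\C(D)$ for a curve $D\subset R(M)$ lying over $C$, not over $\C(C)$, and individual matrix entries are not conjugation-invariant, so they do not give functions on $C$; moreover being conjugate into $SL_2(\mathcal{O}_\chi)$ bounds only conjugation-invariant combinations. What is actually true, and what the paper uses together with \cite[Theorem~2.2.1]{CS}, is that the trace functions $I_\gamma$ for $\gamma\in\pi_1 N$ have no pole at $\chi$.
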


Note that an essential surface is not necessarily Thurston norm minimizing,
and that if $\deg \mathcal{T}_\psi^C < 2 ~ ||\psi||_T$,
then $c(\mathcal{T}_\psi^C) = 0$,
but the leading coefficient of $\mathcal{T}_\psi^C$ is not necessarily bounded.
Thus we can ask the following two questions:
\begin{enumerate}
\item Can condition $(2)$ in Theorem \ref{thm_main} be eliminated?
\item Can the conclusion `$c(\mathcal{T}_\psi^C)(\chi)$ is finite' of Theorem \ref{thm_main} be replaced by `the leading coefficient of $\mathcal{T}_\psi^C$ has a finite value at $\chi$'?
\end{enumerate}

The proof of Theorem \ref{thm_main} is based
on the following three key observations.
We denote by $S'$ the non-separating surface in condition $(3)$
and by $N$ the complement of an open tubular neighborhood of $S'$
identified with $S' \times (-1, 1)$.
Since the surface $S$ is essential, we can regard $\pi_1 N$
as a subgroup of $\pi_1 M$.
First, for an irreducible representation $\rho \colon \pi_1 M \to SL_2(\C)$,
the coefficient of the highest degree in a certain normalization
of the twisted Alexander polynomial associated to $\psi$ and $\rho$ coincides
with the torsion invariant of the pair $(N, S' \times 1)$
associated to $\rho|_{\pi_1 N}$.
This is proved by a surgery formula of torsion invariants.
Second, by the definition of torsion invariants the regular function
on the curve $C$ induced by the above torsion invariant is written
as a polynomial in $\{ I_\gamma \}_{\gamma \in \pi_1 N}$,
where $I_\gamma \colon C \to \C$ is the trace function
associated to $\gamma \in \pi_1 M$.
Thirdly, it follows from \cite[Theorem 2.2.1]{CS}
that $I_\gamma$ does not have a pole at the ideal point $\chi$
for all $\gamma \in \pi_1 N$.
Combining these observations, we can prove Theorem \ref{thm_main}.
See Section $4$ for the details of the proof.

This paper is organized as follows.
Section $2$ contains a brief overview of the construction of essential surfaces
from an ideal point by Culler and Shalen.
In Section $3$ we define the torsion polynomial functions
for general $3$-manifolds. 
Section $4$ is devoted to the proof of Theorem \ref{thm_main} along the above outline.
In this paper we mainly work with Reidemeister torsion
rather than twisted Alexander polynomials.

\subsection*{Acknowledgment}

The author would like to thank the anonymous referee
for helpful suggestions in revising the manuscript.
This research was supported by JSPS Research Fellowships for Young Scientists.

%%%%%%% Section 2 %%%%%%%%%%%%%%%%%%%%%%%%%%%%%%%%%%%%%%%%%%%%%%%%%%%%%%%%%%%%%
\section{Ideal points and essential surfaces}

We begin with setting up notation, and briefly review
the construction of essential surfaces from ideal points
of the $SL_2(\C)$-character variety by Culler and Shalen \cite{CS}.
For a thorough treatment we refer to the exposition \cite{Sh}.
For more details on character varieties we also refer to \cite{LM}.

Let $M$ be a connected compact orientable irreducible $3$-manifold.
We set $R(M) = \hom(\pi_1 M, SL_2(\C))$, which naturally has the structure
of an affine algebraic set over $\C$.
The algebraic group $SL_2(\C)$ acts on $R(M)$ by conjugation of representations,
and we denote by $X(M)$ the algebro-geometric quotient
$\hom(\pi_1 M, SL_2(\C)) // SL_2(\C)$
and by $t \colon R(M) \to X(M)$ the quotient map.
The affine algebraic set $X(M)$ is called
the \textit{$SL_2(\C)$-character variety} of $M$.
Its coordinate ring $\C[X(M)]$ is equal
to the subring $\C[R(M)]^{SL_2(\C)}$ of $\C[R(M)]$
consisting of regular functions which are invariant under conjugation.
For $\rho \in R(M)$ its \textit{character} $\chi_\rho \colon \pi_1 M \to \C$ is
defined by $\chi_\rho(\gamma) = \tr \rho(\gamma)$ for $\gamma \in \pi_1 M$.
It is known that $X(M)$ is realized as the set of the characters $\chi_\rho$
for $\rho \in R(M)$ so that $t(\rho) = \chi_\rho$.
For $\gamma \in \pi_1 M$ the \textit{trace function}
$I_\gamma \colon X(M) \to \C$ is defined
by $I_\gamma(\chi_\rho) = \tr \rho(\gamma)$ for $\rho \in R(M)$.
It is known that $\C[X(M)]$ is generated
by $\{ I_\gamma \}_{\gamma \in \pi_1 M}$.
We denote by $X^{irr}(M)$ the Zariski closure of the subset of $X(M)$
consisting of the characters of irreducible representations. 

Suppose $X(M)$ has an irreducible component of positive dimension,
and let $C$ be a curve in $X(M)$.
For the smooth projective model $\widetilde{C}$ of $C$
the points where the rational map $\widetilde{C} \to C$ is undefined are called
the \textit{ideal points} of $C$.
Let $\chi$ be an ideal point of $C$.
By \cite[Proposition 1.4.4]{CS} there exists a curve $D$ in $t^{-1}(C)$
such that $t|_D$ is not constant.
The map $t|_D$ extends
to a regular map $\widetilde{t|_D} \colon \widetilde{D} \to \widetilde{C}$
so that the inverse image of ideal points of $C$ consists of ones of $D$.
Thus we have the following commutative diagram of rational maps:
\[
\begin{CD}
\widetilde{D} @>>> D @>>> R(M) \\
@VV \widetilde{t|_D} V @VV t|_D V @VV t V\\
\widetilde{C} @>>> C @>>> X(M) 
\end{CD}
\]
Let $\tilde{\chi} \in \widetilde{D}$ be an ideal point of $D$
with $\widetilde{t|_D}(\tilde{\chi}) = \chi$.
From Bass-Serre theory~\cite{Se1, Se2} associated
to the discrete valuation of $\C(D)$ at $\tilde{\chi}$
there exists a canonical action of $SL_2(\C(D))$ on a tree $T_{\tilde{\chi}}$
without inversions.
Here an action of a group on a tree is said to be \textit{without inversions}
if the action does not reverse the orientation of any invariant edge.
We denote by $\tilde{\rho} \colon \pi_1 M \to SL_2(\C(D))$
the tautological representation, which is defined
by $\tilde{\rho}(\gamma)(\rho) = \rho(\gamma)$
for $\gamma \in \pi_1 M$ and $\rho \in D$.
Pulling back the action by $\tilde{\rho}$, we have an action
of $\pi_1 M$ on $T_{\tilde{\chi}}$ without inversions.
A heart of Culler-Shalen theory is that the action is nontrivial,
i.e., for any vertex of $T_{\tilde{\chi}}$
the stabilizer subgroup of $\pi_1 M$ is proper~\cite[Theorem 2.2.1]{CS}.
Essentially due to Stallings, Epstein and Waldhausen
it follows from the nontriviality that there exists
a map $f \colon M \to T_{\tilde{\chi}} / \pi_1 M$
such that $f^{-1}(P)$ is an essential surface,
where $P$ is the set of the midpoints of edges in $T_{\tilde{\chi}} / \pi_1 M$.
We recall that a non-empty properly embedded compact orientable surface $S$
in $M$ is called \textit{essential} if for any component $S_0$ of $S$
the inclusion-induced homomorphism $\pi_1 S_0 \to \pi_1 M$ is injective,
and $S_0$ is not homeomorphic to $S^2$ nor a boundary parallel surface.
We say that \textit{$\chi$ gives a surface $S$} if $S = f^{-1}(P)$
for some $f$ as above.

%%%%%%% Section 3 %%%%%%%%%%%%%%%%%%%%%%%%%%%%%%%%%%%%%%%%%%%%%%%%%%%%%%%%%%%%%
\section{Torsion polynomial functions}

We first review Reidemeister torsion and its refinement by an Euler structure
introduced by Turaev~\cite{Tu1, Tu2}.
Then we define the torsion polynomial functions for general $3$-manifolds,
following the constructions of these for knot complements
by Dunfield, Friedl and Jackson~\cite[Theorem 1.5 and Theorem 7.2]{DFJ}.
For basics of topological torsion invariants we also refer the reader
to the expositions \cite{Mi, N}.

Let $C_* = (C_n \xrightarrow{\partial_n} C_{n-1} \to \cdots \to C_0)$ be
a finite dimensional chain complex over a field $\F$,
and let $c = (c_i)_i$ and $h = (h_i)_i$ be bases of $C_i$ and $H_i(C_*)$
respectively.
We choose a basis $b_i$ of $\im \partial_{i+1}$ for each $i$.
Taking a lift of $h_i$ in $\ker \partial_i$ and combining it with $b_i$,
we have a basis $b_i h_i$ of $\ker \partial_i$ for each $i$.
Then taking a lift of $b_{i-1}$ in $C_i$ and combining it with $b_i h_i$,
we have a basis $b_i h_i b_{i-1}$ of $C_i$ for each $i$.
The \textit{algebraic torsion} $\tau(C_*, c, h)$ is defined as:
\[ \tau(C_*, c, h) := \prod_{i=0}^n [b_i h_i b_{i-1} / c_i]^{(-1)^{i+1}} ~\in \F^\times, \]
where $[b_i h_i b_{i-1} / c_i]$ is the determinant of the base change matrix
from $c_i$ to $b_i h_i b_{i-1}$ for each $i$.
It can be checked that $\tau(C_*, c, h)$ does not depend
on the choices of $(b_i)_i$ and $(b_i h_i b_{i-1})_i$.
When $C_*$ is acyclic,
we just write $\tau(C_*, c)$ for $\tau(C_*, c, \emptyset)$.

Let $Y$ be a connected finite CW-complex
and let $Z$ be a proper subcomplex of $Y$.
We denote by $\widetilde{Y}$ the universal cover of $Y$
and by $\widetilde{Z}$ the pullback of $Z$
by the covering map $\widetilde{Y} \to Y$.
For a representation $\rho \colon \pi_1 Y \to GL_n(\F)$
we define the twisted homology group and the twisted cohomology group as:
\begin{align*}
H_i^\rho(Y, Z; \F) &:= H_i(C_*(\widetilde{Y}, \widetilde{Z}) \otimes_{\Z[\pi_1 Y]} \F^n), \\
H_\rho^i(Y, Z; \F) &:= H^i(\hom_{\Z[\pi_1 Y]}(C_*(\widetilde{Y}, \widetilde{Z}), \F^n)).
\end{align*}
When $Z$ is empty, we just write $H_i^\rho(Y; \F^n)$ and $H_\rho^i(Y; \F^n)$
for $H_i^\rho(Y, \emptyset; \F^n)$ and $H_\rho^i(Y, \emptyset; \F^n)$
respectively.

For a representation $\rho \colon \pi_1 Y \to GL_n(\F)$
and a basis $h$ of $H_*^\rho(Y, Z; \F^n)$
the \textit{Reidemeister torsion} $\tau_\rho(Y, Z; h)$
associated to $\rho$ and $h$ is defined as follows.
We choose a lift $\{ \tilde{e}_i \}$ of cells
of $Y \setminus Z$ in $\widetilde{Y}$.
Then
\[ \tau_\rho(Y, Z; h) := \tau(C_*(\widetilde{Y}, \widetilde{Z}) \otimes_{\Z[\pi_1 Y]} \F^n, \langle \tilde{e}_i \otimes f_j \rangle_{i, j}, h) ~\in \F^\times / (-1)^n \det \rho(\pi_1 Y), \]
where $\langle f_1, \dots, f_n \rangle$ is the standard basis of $\F^n$.
It is checked that $\tau_\rho(Y, Z; h)$ is invariant
under conjugation of representations,
and is known that $\tau_\rho(Y, Z; h)$ is a simple homotopy invariant.
When $Z$ is empty or when $H_*^\rho(Y, Z; \F^n) = 0$,
then we drop $Z$ or $h$ in the notation $\tau_\rho(Y, Z; h)$.

Now we suppose that $\chi(Y) = 0$.
For two lifts $\{ \tilde{e}_i \}$ and $\{ \tilde{e}_i' \}$
of cells of $Y$ in $\widetilde{Y}$ we set
\[ \{ \tilde{e}_i' \} / \{ \tilde{e}_i \} := \sum_i (-1)^{\dim e_i} \tilde{e}_i' / \tilde{e}_i ~ \in H_1(Y; \Z), \]
where $\tilde{e}_i' / \tilde{e}_i$ is the element $[\gamma] \in H_1(Y; \Z)$
for $\gamma \in \pi_1 Y$ such that $\tilde{e}_i' = \gamma \cdot \tilde{e}_i$.
Two lifts $\{ \tilde{e}_i \}$ and $\{ \tilde{e}_i' \}$ are called equivalent
if $\{ \tilde{e}_i' \} / \{ \tilde{e}_i \} = 0$.
An equivalence class of a lift $\{ \tilde{e}_i\}$ is
called an \textit{Euler structure} of $Y$ and we denote by $\eul(Y)$
the set of Euler structures.
For $h \in H_1(Y; \Z)$ and $[\{ \tilde{e}_i \}] \in \eul(Y)$ we set
$[h \cdot \{ \tilde{e}_i \}]$ to be the class of some lift $\{ \tilde{e}_i \}$
with $\{ \tilde{e}_i' \} / \{ \tilde{e}_i \} = h$.
This defines a free and transitive action of $H_1(Y; \Z)$ on $\eul(Y)$.
For a subdivision $Y'$ it is checked
that the natural map $\eul(Y') \to \eul(Y)$ is a $H^1(M)$-equivalent bijection.

For a representation $\rho \colon \pi_1 Y \to GL_n(\F)$
and $\mathfrak{e} \in \eul(Y)$ we define
the \textit{refined Reidemeister torsion} $\tau_\rho(Y; \mathfrak{e})$
associated to $\rho$ and $\mathfrak{e}$ is defined as follows.
We choose a lift $\{ \tilde{e}_i \}$ of cells of $Y$ in $\widetilde{Y}$
representing $\mathfrak{e}$.
If $H_*^\rho(Y; \F^n) = 0$, then we define
\[ \tau_\rho(Y; \mathfrak{e}) := \tau(C_*(\widetilde{Y}) \otimes_{\Z[\pi_1 Y]} \F^n, \langle \tilde{e}_i \otimes f_j \rangle_{i, j}, h) ~\in \F^\times / \langle (-1)^n \rangle, \]
and if $H_*^\rho(Y; \F^n) \neq 0$, then we set $\tau_\rho(Y; \mathfrak{e}) = 0$.
It is checked that $\tau_\rho(Y; \mathfrak{e})$ is also invariant
under conjugation of representations, and is known
that $\tau_\rho(Y; \mathfrak{e})$ is invariant
under subdivisions of CW-complexes.
It is straightforward from the definitions to see
that if $H_*^\rho(Y; \F^n) = 0$, then
\[ \tau_\rho(Y) = [\tau_\rho(Y; \mathfrak{e})] ~\in \F^\times / (-1)^n \det \rho(\pi_1 Y). \]
Note that the ambiguity $\langle (-1)^n \rangle$ can be also eliminated
if a \textit{homology orientation} is fixed.
Since the case of $n=2$ is our focus here,
we do not touch that topic in this paper.

In the following we consider torsion invariants
of a connected compact orientable $3$-manifold $M$
with empty or toroidal boundary.
Let $\psi \in H^1(M; \Z)$ be nontrivial.
By abuse of notation, we use the same letter $\psi$
for the homomorphism $\pi_1 M \to \langle t \rangle$ corresponding to $\psi$,
where $\langle t \rangle$ is the infinite cyclic group
generated by the indeterminate $t$.
For a representation $\rho \colon \pi_1 M \to GL_n(\F)$
a representation $\psi \otimes \rho \colon \pi_1 M \to GL_n(\F(t))$ is defined
by $(\psi \otimes \rho)(\gamma) = \psi(\gamma) \rho(\gamma)$
for $\gamma \in \pi_1 M$.
If $H_*^{\psi \otimes \rho}(M; \F(t)^n) = 0$,
then the Reidemeister torsion
$\tau_{\psi \otimes \rho}(M) \in \F(t)^\times / (-1)^n \det (\psi \otimes \rho)(\pi_1 M)$
is defined, and was shown
by Kirk and Livingston~\cite{KL}, and Kitano~\cite{Kitan}
to be essentially equal to the \textit{twisted Alexander polynomial}
associated to $\psi$ and $\rho$.
We refer the reader to the survey paper \cite{FV2}
for details and related topics on twisted Alexander polynomials.
Friedl and Kim~\cite[Theorem 1.1 and Theorem 1.2]{FK1} showed that
\[ \deg \tau_{\psi \otimes \rho}(M) \leq n ~ ||\psi||_T \]
and that if $\psi$ is represented by a fiber bundle $M \to S^1$
and if $M \neq S^1 \times D^2, S^1 \times S^2$, then 
\[ \deg \tau_{\psi \otimes \rho}(M) = n ~ || \psi ||_T \]
and $\tau_{\psi \otimes \rho}(M)$ is represented
by a fraction of monic polynomials over $\F$
(cf. \cite{FK2}).
Here $|| \psi ||_T$ is the Thurston norm of $\psi$~\cite{Th1},
which is defined to be the minimum of $\sum_{i=1}^l \max \{ -\chi(S_i), 0 \}$
for a properly embedded surface dual to $\psi$
with the components $S_1, \dots, S_l$.
The second statement for fibered knot complements has been shown
by Cha~\cite{C}, and Goda, Kitano and Morifuji \cite{GKM}.

The following lemma is an extension of \cite[Theorem 1.5 and Theorem 7.2]{DFJ}
on knot complements to general $3$-manifolds.
\begin{lem} \label{lem_T}
Let $X_0$ be an irreducible component of $X^{irr}(M)$.
There is an invariant $\mathcal{T}_\psi^{X_0} \in \C[X_0][t, t^{-1}]$,
which is the refined Reidemeister torsion
associated to a representation of $\pi_1 M$ and an Euler structure of $M$,
satisfying the following
for all representations $\rho \colon \pi_1 M \to SL_2(\C)$
with $\chi_\rho \in X_0$:
\begin{enumerate}
\item[\upshape{(1)}] If $H_*^{\psi \otimes \rho}(M; \C(t)^2) = 0$ then, $\mathcal{T}_\psi^{X_0}(\chi_\rho) = \tau_{\psi \otimes \rho}(M) \in \C(t) / \langle t \rangle$.
\item[\upshape{(2)}] If $H_*^{\psi \otimes \rho}(M; \C(t)^2) \neq 0$ then, $\mathcal{T}_\psi^{X_0}(\chi_\rho) = 0$.
\item[\upshape{(3)}] $\mathcal{T}_\psi^{X_0}(\chi_\rho)(t^{-1}) = \mathcal{T}_\psi^{X_0}(\chi_\rho)(t)$.
\end{enumerate}
\end{lem}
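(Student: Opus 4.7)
The plan is to mimic the Dunfield--Friedl--Jackson construction \cite[Theorem 1.5, Theorem 7.2]{DFJ} using the tautological representation associated with $X_0$, then verify the three listed properties.

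First I would choose an irreducible component $R_0$ of $R(M)$ such that $t(R_0) = X_0$; generically the representations in $R_0$ are irreducible, and the conjugation action is free up to $\pm I$, so $\C(R_0)^{PSL_2(\C)} = \C(X_0)$. Over the function field $K = \C(R_0)$ one has the tautological representation $\tilde\rho \colon \pi_1 M \to SL_2(K)$, $\gamma \mapsto (\rho \mapsto \rho(\gamma))$. Fix an Euler structure $\mathfrak{e} \in \eul(M)$. Because $n = 2$, the sign ambiguity $\langle (-1)^n\rangle$ disappears, and so the refined Reidemeister torsion
\[ \tau_{\psi \otimes \tilde\rho}(M; \mathfrak{e}) \in K(t) \]
is well defined, with no indeterminacy. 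Conjugation invariance of the refined torsion places it inside the fixed field $\C(X_0)(t) \subset K(t)$.

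Next I would prove that this torsion actually lies in $\C[X_0][t, t^{-1}]$. Using a CW-decomposition of $M$ with a single $0$-cell and, when $\partial M \neq \emptyset$, no $3$-cells, write the torsion via Fox calculus as an alternating product of determinants of matrices with entries in $\C[R_0][t, t^{-1}]$. The Friedl--Kim bound $\deg \tau_{\psi \otimes \rho}(M) \leq 2 \|\psi\|_T$ specializes to generic $\rho \in R_0$, which, together with the fact that any pole in $t$ would have to appear on a divisor of $R_0$ invariant under conjugation, forces the rational function $\tau_{\psi \otimes \tilde\rho}(M;\mathfrak{e})$ to be a Laurent polynomial in $t$ with coefficients which are regular functions on $X_0$. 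By Procesi--Fricke, those coefficient functions are expressible as polynomials in the trace functions $I_\gamma$, so they indeed lie in $\C[X_0]$. Define $\mathcal{T}_\psi^{X_0}$ to be the result; changing $\mathfrak{e}$ by $h \in H_1(M;\Z)$ multiplies the torsion by $\det(\psi \otimes \tilde\rho)(h) = t^{2\psi(h)}$, so the class in $\C[X_0][t,t^{-1}]/\langle t\rangle$ is canonical and we pick any representative.

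For properties (1) and (2), I would observe that when $H_*^{\psi\otimes\rho}(M;\C(t)^2) = 0$, the matrix determinants appearing in the Fox--calculus formula for $\tilde\rho$ remain nonzero after evaluating at $\chi_\rho$, so specialization commutes with taking torsion and $\mathcal{T}_\psi^{X_0}(\chi_\rho) = \tau_{\psi\otimes\rho}(M)$ in $\C(t)/\langle t\rangle$. When $H_*^{\psi\otimes\rho}(M;\C(t)^2) \neq 0$, one of those determinants vanishes at $\chi_\rho$, forcing $\mathcal{T}_\psi^{X_0}(\chi_\rho) = 0$. Property (3), the $t \leftrightarrow t^{-1}$ symmetry, follows from Poincar\'e duality for the refined torsion of an orientable $3$-manifold with empty or toroidal boundary: after choosing a symmetric Euler structure (in the sense of Turaev), the duality pairing on the cellular chain complex identifies the torsion of $(M, \psi \otimes \tilde\rho)$ with that of $(M, \psi^{-1} \otimes \tilde\rho)$ modulo monomial factors, and these factors are precisely elements of $\langle t\rangle$, so the symmetry holds in $\C[X_0][t,t^{-1}]$ (or after further adjusting $\mathfrak{e}$ within its $H_1$-orbit).

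The main obstacle is Step 2: controlling the denominators that arise in the Fox--calculus formula and deducing that the torsion is Laurent-polynomial in $t$ with regular coefficients on $X_0$. The DFJ argument for knot complements relies on the deficiency-one presentation of a knot group and the explicit identification of the relevant boundary-torus term; for a general $3$-manifold with toroidal boundary one must handle the contribution of each boundary component and the possibility that $\dim X_0 > 1$, but the same principle applies once one tracks how the universal denominators cancel against the factors in the numerator.
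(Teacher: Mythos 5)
Your overall architecture matches the paper's: pass to an irreducible component $R_0$ of $R(M)$ with $t(R_0)=X_0$, form the tautological representation $\tilde\rho\colon\pi_1 M\to SL_2(\C(R_0))$, take the refined torsion with respect to an Euler structure, use conjugation invariance to descend to $X_0$, and get condition $(3)$ from duality. However, the step you yourself flag as ``the main obstacle'' --- showing that $\tau_{\psi\otimes\tilde\rho}(M;\mathfrak{e})$ is a Laurent polynomial in $t$ whose coefficients are regular on all of $X_0$ --- is genuinely not carried out, and the mechanism you sketch for it does not work as stated. The Friedl--Kim inequality $\deg\tau_{\psi\otimes\rho}(M)\le 2\,||\psi||_T$ bounds the degree of the torsion as a rational function (a difference of degrees of numerator and denominator in the Fox-calculus quotient); it does not rule out a denominator in $t$. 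Likewise, a pole of $\tau_{\psi\otimes\tilde\rho}(M;\mathfrak{e})\in\C(R_0)(t)$ in the variable $t$ is not a divisor of the variety $R_0$, so the conjugation-invariance argument has no purchase on it. The paper closes this gap by a citation: since the irreducible locus is dense in $R_0$, the tautological representation is irreducible, and \cite[Theorem A.1]{FKK} then gives $\tau_{\psi\otimes\tilde\rho}(M;\mathfrak{e})\in\C(R_0)[t,t^{-1}]$; regularity of the coefficients (membership in $\C[R_0]$ rather than $\C(R_0)$) then follows because the \emph{refined} torsion specializes to a well-defined element of $\C[t,t^{-1}]$ at \emph{every} $\rho\in R_0$ (being $0$ when the twisted homology is nonzero), not just generically. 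Without some such input your argument establishes at best an element of $\C(X_0)[t,t^{-1}]$ or $\C[X_0](t)$.

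A secondary issue: you propose to resolve the Euler-structure ambiguity by noting that the class in $\C[X_0][t,t^{-1}]/\langle t\rangle$ is canonical and ``picking any representative.'' But condition $(3)$ is an exact equality of Laurent polynomials, not an equality modulo $\langle t\rangle$, so an arbitrary representative will not satisfy it. The paper fixes this by the specific normalization $\mathcal{T}=\psi(c_1(\mathfrak{s}))\cdot\tau_{\psi\otimes\tilde\rho}(M;\mathfrak{e})$, where $\mathfrak{s}$ is the $\spin^c$-structure corresponding to $\mathfrak{e}$, and then condition $(3)$ is exactly the duality theorem \cite[Theorem 1.5]{FKK} applied at the dense set of irreducible characters. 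Your appeal to a ``symmetric Euler structure'' is the right idea in spirit, but you would need to make the normalization explicit for $(3)$ to hold on the nose.
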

We call $\mathcal{T}_\psi^{X_0}$ in Lemma \ref{lem_T}
the \textit{torsion polynomial function} of $X_0$. 
For a curve $C$ in $X_0$ we denote by $\mathcal{T}_\psi^C \in \C[X_0][t, t^{-1}]$
the restriction of $\mathcal{T}_\psi^{X_0}$ to $C$,
and by $c(\mathcal{T}_\psi^C) \in \C[C]$ the coefficient function
in $\mathcal{T}_\psi^C$ of the highest degree $2 ~ ||\psi||_T$.

Before the proof we recall the relation
between $\spin^c$-structures and Euler structures for $3$-manifolds.
We denote by $\spin^c(M)$ the set of $\spin^c$-structures of $M$.
The set $\spin^c(M)$ admits a canonical free and transitive action
by $H_1(M; \Z)$.
Given $\mathfrak{s} \in \spin^c(M)$, we can consider
the Chern class $c_1(\mathfrak{s}) \in H^2(M, \partial M; \Z)$, and we have
\[ c_1(h \cdot \mathfrak{s}) = c_1(\mathfrak{s}) + 2h \]
for $h \in H_1(M; \Z)$ under the identification
$H^2(M, \partial M; \Z) = H_1(M; \Z)$ by the Poincar\'e duality.
Turaev showed that there exists a canonical $H_1(M; \Z)$-equivariant bijection
between $\spin^c(M)$ and $\eul(M)$ for a triangulation of $M$.
See \cite[Section XI.1]{Tu2} for full details.

\begin{proof}[Proof of Lemma \ref{lem_T}]
Let $X_0$ be an irreducible component of $X^{irr}(M)$.
By \cite[Proposition 1.4.4]{CS} there exists
an irreducible component $R_0$ of $R(M)$ such that $t(R_0) = X_0$.
We regard the tautological representation
$\tilde{\rho} \colon \pi_1 M \to SL_2(\C[R_0])$,
which is defined as in Section $2$,
as a representation $\pi_1 M \to SL_2(\C(R_0))$.
Since the subspace of $R_0$ consisting of irreducible representations is dense,
$\tilde{\rho}$ is also irreducible.  
We choose $\mathfrak{e} \in \eul(M)$ corresponding
to $\mathfrak{s} \in \spin^c(M)$.
Then we set
\begin{equation} \label{eq_T1}
\mathcal{T} = \psi(c_1(\mathfrak{s})) \cdot \tau_{\psi \otimes \tilde{\rho}}(M; \mathfrak{e}) ~\in \C(R_0)(t),
\end{equation}
where $\psi(c_1(\mathfrak{s})) \in \langle t \rangle$ is
the image of $c_1(\mathfrak{s})$ by the homomorphism
$H_1(M) \to \langle t \rangle$ induced
by $\psi \colon \pi_1 M \to \langle t \rangle$.
Since $\tilde{\rho}$ is irreducible, by \cite[Theorem A.1]{FKK}
we see that $\mathcal{T} \in \C(R_0)[t, t^{-1}]$.
It follows from \eqref{eq_T1} that
\begin{equation} \label{eq_T2}
\mathcal{T}(\rho) = \psi(c_1(\mathfrak{s})) \cdot \tau_{\psi \otimes \rho}(M; \mathfrak{e}) \in ~\C[t, t^{-1}]
\end{equation}
for all $\rho \in R_0$.
In particular, the coefficients of $\mathcal{T}(\rho)$ have well-defined values
for all $\rho \in R_0$, and hence $\mathcal{T} \in \C[R_0][t, t^{-1}]$.
Furthermore, Reidemeister torsion is invariant
under conjugation of representations, and so we see
that $\mathcal{T} \in \C[R_0]^{SL_2(\C)}[t, t^{-1}]$, which implies
that $\mathcal{T}$ descends to an element of $\C[X_0][t, t^{-1}]$.
We define $\mathcal{T}_\psi^{X_0}$ to be this element.
Conditions $(1)$ and $(2)$ can be checked from \eqref{eq_T2}.
It follows from \cite[Theorem 1.5]{FKK} and the proof
that for irreducible $\rho \in R_0$ we have
\begin{equation} \label{eq_T3}
\mathcal{T}_\psi^{X_0}(\chi_\rho)(t^{-1}) = \mathcal{T}_\psi^{X_0}(\chi_\rho)(t).
\end{equation}
Since the subset of $\chi_\rho$ for irreducible $\rho \in R_0$ is
dense in $X_0$, \eqref{eq_T3} holds for all $\chi_\rho \in X_0$,
which shows condition $(3)$.
\end{proof}

%%%%%%% Section 4 %%%%%%%%%%%%%%%%%%%%%%%%%%%%%%%%%%%%%%%%%%%%%%%%%%%%%%%%%%%%%
\section{Proof of the main theorem}

Now we prove Theorem \ref{thm_main}.
Let $M$ be a connected compact orientable irreducible $3$-manifold
with empty or toroidal boundary and let $\psi \in H^1(M; \Z)$ be nontrivial.
Suppose that $X^{irr}(M)$ contains a curve $C$
and suppose that an ideal point $\chi$ of $C$ gives
an essential surface $S$ in $M$ satisfying the following:
\begin{enumerate}
\item[\upshape{(1)}] The homology class of $S$ is dual to $\psi$.
\item[\upshape{(2)}] $S$ is Thurston norm minimizing.
\item[\upshape{(3)}] The surface obtained by identifying components of $S$ parallel to each other is non-separating.
\end{enumerate}
We need to show that $c(\mathcal{T}_\psi^C)(\chi)$ is finite.

If $\deg \mathcal{T}_\psi^C < 2 ~ ||\psi||_T$,
then $c(\mathcal{T}_\psi^C)(\chi) = 0$.
Therefore in the following we can also suppose
that $\deg \mathcal{T}_\psi^C = 2 ~ ||\psi||_T$,
which holds if and only if $H_*^{\psi \otimes \rho}(M; \C(t)^2) = 0$
and $\deg \tau_{\psi \otimes \rho}(M) = 2 ~ ||\psi||_T$
for all but finitely many irreducible representations
$\rho \colon \pi_1 M \to SL_2(\C)$ with $\chi_\rho \in C$.

We denote by $S'$ the surface in condition $(3)$
with its components labeled $S_1, \dots, S_l$.
Note that since $S$ is essential, so is $S'$.
We identify a tubular neighborhood of $S'$ in $M$ with $S' \times [-1, 1]$,
and set $N := M \setminus S' \times (-1, 1)$.
We denote by $\iota_\pm \colon S' \to N$ the natural embeddings
such that $\iota_\pm(S') = S' \times (\pm 1)$.
Since the inclusion induced homomorphisms $\pi_1 N \to \pi_1 M$
and $\pi_1 S_i \to \pi_1 M$ for all $i$ are all injective,
in the following we identify $\pi_1 N$ and $\pi_1 S_i$ with their images.
(More precisely, for such identifications we need to fix paths
connecting base points of subspaces to the one in $M$.
Also in considering the twisted homology and cohomology groups of subspaces
such paths are understood to be chosen.
See, for instance, \cite[Section 2.1]{FK1} for details on a general treatment.)

Taking appropriate triangulations of $M$, $N$ and $S'$
and lifts of simplices in the universal covers,
we have the following exact sequences of twisted chain complexes
for a representation $\rho \colon \pi_1 M \to SL_2(\C)$:
\begin{gather}
\label{eq_exact1}
0 \to \bigoplus_{i=1}^l C_*(\widetilde{S_i}) \otimes \C(t)^2 \xrightarrow{t (\iota_+)_* - (\iota_-)_*} C_*(\widetilde{N}) \otimes \C(t)^2 \to C_*(\widetilde{M}) \otimes \C(t)^2 \to 0, \\
\label{eq_exact2}
0 \to \bigoplus_{i=1}^l C_*(\widetilde{S_i}) \otimes \C^2 \xrightarrow{(\iota_+)_*} C_*(\widetilde{N}) \otimes \C^2 \to C_*(\widetilde{N}, \widetilde{S' \times 1}) \otimes \C^2 \to 0,
\end{gather}
where the local coefficients in the first and second exact sequences are
understood to be induced by $\psi \otimes \rho$ and $\rho$ respectively.

First, we prove that
for an irreducible representation $\rho \colon \pi_1 M \to SL_2(\C)$
such that $H_*^{\psi \otimes \rho}(M; \C(t)^2) = 0$
and $\deg \tau_{\psi \otimes \rho}(M) = 2 ~ ||\psi||_T$ the homomorphism
$(\iota_+)_* \colon \bigoplus_{i=1}^l H_*^{\rho}(S_i; \C^2) \to H_*^\rho(N; \C^2)$
is an isomorphism and $H_*^\rho(N, S \times 1; \C^2) = 0$.
The second assertion follows from the first one
and the homology long exact sequence of \eqref{eq_exact2}.
Since $H_*^{\psi \otimes \rho}(M; \C(t)^2) = 0$,
it follows from the homology long exact sequence of \eqref{eq_exact1} that
$(t (\iota_+)_* - (\iota_-)) \colon \bigoplus_{i=1}^l H_*^\rho(S_i; \C^2) \otimes \C(t) \to H_*^\rho(N; \C^2) \otimes \C(t)$
is an isomorphism.
Hence
\begin{equation} \label{eq_rank}
\rank \bigoplus_{i=1}^l H_*^\rho(S_i; \C^2) = \rank H_*^\rho(N; \C^2).
\end{equation}
Since $(N, S \times 1)$ is homotopy equivalent to a CW pair
with all vertices in $S \times 1$ and without $3$-cells,
$H_0^\rho(N, S \times 1) = H_3^\rho(N, S \times 1) = 0$.
Hence it follows from the homology long exact sequence of \eqref{eq_exact2} that
$(\iota_+)_* \colon \bigoplus_{i=1}^l H_j^\rho(S_i; \C^2) \to H_j^\rho(N; \C^2)$
is surjective for $j =0$ and is injective for $j=2$.
From \eqref{eq_rank} the homomorphisms are isomorphisms for $j = 0, 2$.
The assertion for $j = 1$ is proved by techniques developed in \cite{FK1}
in terms of twisted Alexander polynomials.
Since $H_*^{\psi \otimes \rho}(M; \C(t)^2) = 0$
and $\deg \tau_{\psi \otimes \rho}(M) = 2 ~ ||\psi||_T$,
it follows from the proof of \cite[Theorem 1.1]{FK1}
that the inequalities in \cite[Proposition 3.3]{FK1} turn into equalities.
Now it follows from the proof of \cite[Proposition 3.3]{FK1} that
$(\iota_+)_* \colon \bigoplus_{i=1}^l H_1^\rho(S_i; \C^2) \to H_1^\rho(N; \C^2)$
is an isomorphism.

Second, we prove that for an irreducible representation
$\rho \colon \pi_1 M \to SL_2(\C)$ such that
$H_*^{\psi \otimes \rho}(M; \C(t)^2) = 0$
and $\deg \tau_{\psi \otimes \rho}(M) = 2 ~ ||\psi||_T$,
the following formula holds:
\begin{equation} \label{eq_prod}
\tau_{\psi \otimes \rho}(M) = \tau_\rho(N, S' \times 1) \prod_{j=0}^2 \det(t \cdot id - \iota_j)^{(j+1)}, 
\end{equation}
where $\iota_j$ denotes the isomorphism
$(\iota_+)_*^{-1} \circ (\iota_-)_* \colon \bigoplus_{i = 1}^l H_j^\rho(S_i; \C^2) \to H_j^\rho(N; \C^2)$
for each $j$.
We pick a basis $h$ of $H_*^\rho(S; \C^2)$.
By the multiplicativity of Reidemeister torsion~\cite[Theorem 3.1]{Mi} we have
\begin{align}
\label{eq_prod1}
\tau_{\psi \otimes \rho}(N; (\iota_+)_*(h \otimes 1)) \prod_{j=0}^2 \det(t \cdot id - (\iota_j)_*) &= \tau_{\psi \otimes \rho}(S; h \otimes 1) \tau_{\psi \otimes \rho}(M), \\
\label{eq_prod2}
\tau_\rho(N; (\iota_+)_*(h \otimes 1)) &= \tau_\rho(S; h) \tau_{\rho_0}(N, S \times 1).
\end{align}
By the functoriality of Reidemeister torsion~\cite[Proposition 3.6]{Tu1} we have
\begin{align}
\label{eq_prod3}
\tau_{\alpha \otimes \rho}(N; (\iota_+)_*(h \otimes 1)) &= \tau_{\rho}(N; (\iota_+)_*(h)), \\
\label{eq_prod4}
\tau_{\alpha \otimes \rho}(S; h \otimes 1) &= \tau_{\rho}(S; h).
\end{align}
The formula \eqref{eq_prod} follows
from \eqref{eq_prod1}, \eqref{eq_prod2}, \eqref{eq_prod3}, \eqref{eq_prod4}.

Thirdly, we prove that there exists a regular function $\varphi$ of $C$
in the subring of $\C[C]$ generated
by $\{ I_\gamma \}_{\gamma \in \pi_1 N}$ satisfying that
\begin{equation} \label{eq_F}
\varphi(\chi_\rho) = \tau_\rho(N, S \times 1)
\end{equation} 
for all irreducible representations $\rho \colon \pi_1 M \to SL_2(\C)$
with $\chi_\rho \in C$ such that $H_*^{\psi \otimes \rho}(M; \C(t)^2) = 0$
and $\deg \tau_{\psi \otimes \rho}(M) = 2 ~ ||\psi||_T$.
Let $\rho_0$ be such a representation.
We take a finite $2$-dimensional CW-pair $(C, W)$ with $C_0(V, W) = 0$
which is simple homotopy equivalent to $(N, S \times 1)$,
and we identify $\pi_1 V$ with $\pi_1 N$ by the homotopy equivalence.
The differential map
$C_2(\widetilde{V}, \widetilde{W}) \otimes \C^2 \to C_1(\widetilde{V}, \widetilde{W}) \otimes \C^2$
is represented by $\rho_0(A)$ for a matrix $A$ in $\Z[\pi_1 N]$,
where $\rho_0(A)$ is the matrix obtained
by naturally forgetting the submatrix structure of the matrix
whose entries are the images of those of $A$ by $\rho_0$.
The Reidemeister torsion $\tau_{\rho_0}(N, S \times 1)$ equals $\det \rho_0(A)$,
which is written as a polynomial in $\{ \tr \rho_0(A)^i \}_{i \in \Z}$,
and is also one in $\{ \tr \rho_0(\gamma) \}_{\gamma \in \pi_1 N}$.
Thus we can write
\begin{equation} \label{eq_P}
\tau_{\rho_0}(N, S \times 1) = \sum_{\gamma_1, \dots, \gamma_k \in \pi_1 N} a_{\gamma_1, \dots, \gamma_k} \tr \rho_0(\gamma_1) \dots \tr \rho_0(\gamma_k),
\end{equation}
where the sum runs over some finitely many tuples $(\gamma_1, \dots, \gamma_k)$
of elements in $\pi_1 N$ with $a_{\gamma_1, \dots, \gamma_k} \in \C$.
We set
\[ \varphi = \sum_{\gamma_1, \dots, \gamma_k \in \pi_1 N} a_{\gamma_1, \dots, \gamma_k} I_{\gamma_1} \dots I_{\gamma_k}. \]
Since the form of \eqref{eq_P} is invariant
under changes of representations $\rho_0$,
the regular function $\varphi$ satisfies \eqref{eq_F}.

Finally, we prove that $c(\mathcal{T}_C^\psi)(\chi)$ is finite.
It follows from the second step that
$c(\mathcal{T}_C^\psi)(\chi_\rho) = \tau_\rho(S, N \times 1)$
for all but finitely many irreducible representations
$\rho \colon \pi_1 M \to SL_2(\C)$ with $\chi_\rho \in C$.
Hence $c(\mathcal{T}_C^\psi)$ coincides with the regular function $\varphi$
in the third step, which is in the subring of $\C[C]$ generated
by $\{ I_\gamma \}_{\gamma \in \pi_1 N}$.
Since $\pi_1 N$ is contained in the stabilizer subgroup
of a vertex of $T_{\tilde{\chi}}$ in the construction of $S$ in Section $2$,
it follows from \cite[Theorem 2.2.1]{CS} that $I_\gamma$ does not have a pole
at $\chi$ for all $\gamma \in \pi_1 N$. 
Therefore $c(\mathcal{T}_C^\psi)(\chi) \in \C$, which completes the proof.

%%%%%%% References %%%%%%%%%%%%%%%%%%%%%%%%%%%%%%%%%%%%%%%%%%%%%%%%%%%%%%%%%%%%

\end{document}